\newtheorem{lem}{Lemma}[section]
\newtheorem{rem}{Remark}[section]
\newtheorem{example}{Example}[section]
\renewcommand\@biblabel[1]{}
\begin{document}
%================TITLE===========================================
\begin{center}
{\LARGE{Improving Frenet's Frame Using Bishop's Frame}}\\[15pt]
%================AUTHORS====================================
Daniel Carroll$^1$, Emek K\"ose$^1$ \& Ivan Sterling$^1$ 
\end{center}
$^1$ Mathematics and Computer Science Department, St Mary's College of Maryland, St Mary's City, MD, 20686, USA \par
Correspondence: Ivan Sterling,  Mathematics and Computer Science Department, St Mary's College of Maryland, St Mary's City, MD, 20686, USA. Tel: 1-240-431-8185. E-mail: isterling@smcm.edu
\\%%[15pt]
%%%Received: October XX, 201X \quad  Accepted: November XX, 201X \quad Online Published: November XX, 201X \par
%%%doi:10.5539/  \quad \quad \quad    URL: http://dx.doi.org/ \\[10pt]
%%%\author{Authors} 
%%%\title{Improving Frenet's Frame Using Bishop's Frame} 
%\date{\today}
%%%\maketitle
\textbf{Abstract}

The main drawback of the Frenet frame is that it is undefined at those points where the curvature is zero.  Furthermore, in the case of planar curves, the Frenet frame does not agree with the standard framing of curves in the plane.   The main drawback of the Bishop frame is that the principle normal vector N is not in it.  Our new frame, which we call the Beta frame, combines, on a large set of curves,  the best aspects of the Bishop frames and the Frenet frames.  It yields a globally defined normal, a globally defined signed curvature, and a globally defined torsion.  For planar curves it agrees with the standard framing of curves in the plane.

%================KEYWORDS====================================
\textbf{Keywords:} Frenet Frames, Bishop Frames.
\section{Introduction}
Let $\gamma: (a,b) \longrightarrow \mathbb{R}^3$ be a curve in $R^3$.  If $\gamma$ is $C^2$ with $\gamma'(t) \neq 0$, then it carries a Bishop frame $\{T,M_1, M_2\}$.  If $\gamma$ is $C^3$ with $\gamma'(t)$ and $\gamma''(t)$ linearly independent, then it carries a Frenet frame $\{T,N,B\}$.  The main drawback of the Frenet frame is that it is undefined when $\gamma''(t)=0$.  This corresponds precisely to those points where the curvature $\kappa(t)$ is zero.  Also the principle normal vector $N(t)$ of the Frenet frame may have a non-removable discontinuity at these points.   In the case of planar curves, the Frenet frame does not agree with the standard framing of curves in the plane.  Finally the torsion $\tau(t)$ is not defined when $\kappa(t)=0$.  The main drawback of the Bishop frame is that the principle normal vector $N$ is not (except in rare cases) in the set $\{T,M_1, M_2\}$.

The history of the Frenet equations for a curve in $\mathbb{R}^3$ is interesting.  Discovered in 1831 by Senff and Bartels they should probably be called the Senff-Bartels equations.  In 1847 they were rediscovered in the dissertation of Frenet, which was published in 1852.  Independently they were also discovered (and published) by Serret in 1851.  See (Reich, 1973) for details on this early history.

Bishop frames were introduced in 1975 in the Monthly article ``There is More Than One Way to Frame a Curve" (Bishop, 1975).  Bishop frames are now ubiquitous in the literature on curve theory and its applications.

Our new frame, which we call the Beta frame of $\gamma$, combines, on a large set of curves,  the best aspects of the Bishop frames and the Frenet frames.  It yields a globally defined normal $N^\beta$, a globally defined signed curvature $\kappa^\beta$, and a globally defined torsion $\tau^\beta$.  If $\gamma$ is planar, it agrees with the standard framing of curves in the plane.

Our approach was motivated by attempts to improve the details of our work on discrete Frenet frames (Carroll, Hankins, K\"ose \& Sterling).  The Beta frame introduced in this paper discretizes in a natural way consistent with our discrete frame defined in  (Carroll, Hankins, K\"ose \& Sterling).  These ideas are particularly useful in applications, such as DNA analysis and computer graphics.  For example see Hanson's technical report (Hanson, 2007), which discusses several of the issues that we address here.

Notation: $C^0$ means continuous, $C^\infty$ means infinitely differentiable, and $C^\omega$ means real analytic.  For $k \in \mathbb{N}$ we say a function is $C^k$ if its derivatives up to order $k$ are continuous.  If $\gamma'(s) \neq 0$ for all $t$, then $\gamma$ can be reparametrized  by arclength $s$.  If the derivative with respect to $s$ is denoted by $\dot{\gamma}$, then $\Vert \dot{\gamma}(s) \Vert \equiv 1$.  Whenever necessary to simplify notation we assume $0 \in (a,b)$.  Finally, by an abuse of language, we use the term ``planar curve" to mean a curve in $\mathbb{R}^2 \subset \mathbb{R}^3$.

The authors would like to thank the referees for helpful comments.

\section{Before Bishop}
\subsection{The Standard Theory for Curves in $\mathbb{R}^2$}
Let $\gamma: (a,b) \longrightarrow \mathbb{R}^2$ be a $C^2$ curve in two-space with $\Vert \dot{\gamma}(s) \Vert \equiv 1$.  $T(s):=\dot{\gamma}(s)$ is called the unit tangent vector to $\gamma$ at $s$.  Let the normal $N(s)$ be the unique unit vector orthogonal to $T(s)$ such that $\{T(s),N(s)\}$ is positively oriented.  (We think of this $N$ as the good normal, as opposed to the bad normal below.)  The signed curvature $\kappa^{signed}$ is defined by 
\[ \dot{T}(s) =: \kappa^{signed}(s) N(s).\]
The ``unsigned curvature", the curvature of the osculating circle, is 
\[\kappa(s) :=|\kappa^{signed}(s)|.\]

Alternatively, not wisely, one could first define curvature $\kappa^{bad}$ by
\[\kappa^{bad}(s) := \Vert \dot{T}(s) \Vert\]
and then define, when $\kappa^{bad}(s) \neq 0$, the normal $N^{bad}(s)$ by
\[N^{bad}(s) := \frac{\dot{T}(s)}{\Vert \dot{T}(s) \Vert} = \frac{\dot{T}(s)}{\kappa^{bad}(s)}.\]
There seems to be little advantage to this ``bad" alternative and at least two drawbacks.  The first drawback is that $N^{bad}(s)$ is not defined when $\kappa^{bad}(s)=0$, hence $N^{bad}(s)$ has at best a removable discontinuity when $\kappa^{bad}(s)=0$.  The second drawback is that if $\gamma(s)$ changes concavity then $N^{bad}(s)$ has a jump discontinuity as for example in Figure \ref{frenetbetax3}.

\begin{figure}[h]
\begin{center}
\includegraphics[scale=.6]{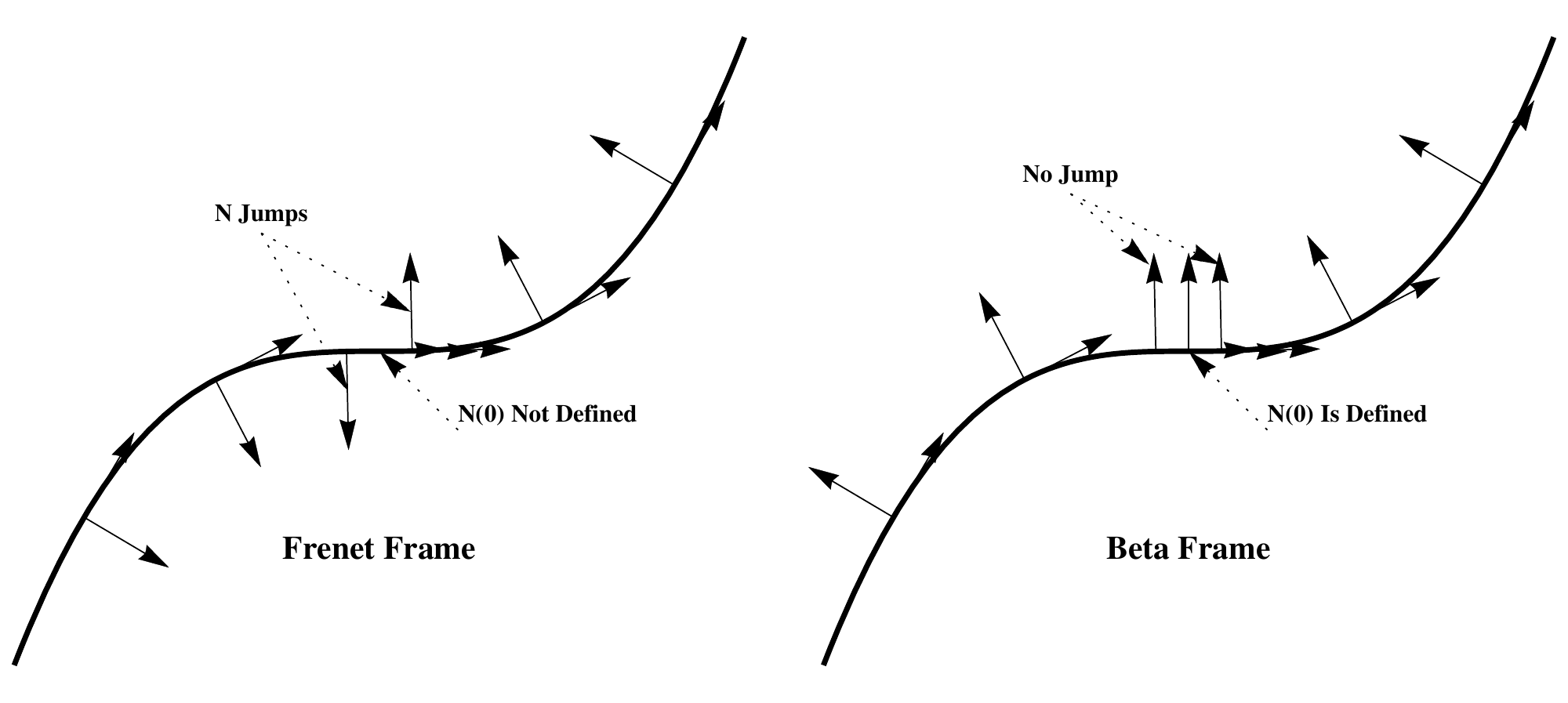}
\caption{The Difference Between the Frenet Frame and the Beta Frame}\label{frenetbetax3}
\end{center}
\end{figure}

\subsection{The Standard Theory for Curves in $\mathbb{R}^3$}
Even though the alternative is bad, it is this bad alternative which is used in the standard Frenet framing for $C^3$ curves $\gamma: (a,b) \longrightarrow \mathbb{R}^3$ with $\Vert \dot{\gamma}(s)\Vert \equiv 1$.  We first define Frenet's curvature $\kappa^f$ by
\[\kappa^f(s) := \Vert \dot{T}(s) \Vert\]
and then define, when $\kappa^f(s) \neq 0$, the Frenet (principal) normal $N^f$ by
\[N^f(s) := \frac{\dot{T}(s)}{\Vert \dot{T}(s) \Vert} = \frac{\dot{T}(s)}{\kappa^f(s)}.\]
Note that for planar curves $\kappa^f = \kappa^{bad}$ and  $N^f = N^{bad}$.  As mentioned above, for planar curves the Frenet frame may not agree with the positively oriented standard frame for curves in $\mathbb{R}^2$.  Roughly speaking the purpose of this paper is to do away with ``bad"  (or non-existent) normals whenever possible.
The Frenet (principal) binormal $B^f$ is defined by $B^f(s) = T(s) \times N^f(s)$.

If $\gamma$ is $C^3$ with $\Vert \dot{\gamma}(s)\Vert \equiv 1$ and $\kappa^f(s) \neq 0$, then $\tau^f(s)$ is defined by 
\[\tau^f = \frac{\langle \dot{\gamma} \times \ddot{\gamma}, \dddot{\gamma}\rangle}
{{\kappa^f}^2}.\]
The torsion $\tau^f(s)$ measures the rate of change of the osculating plane, the plane spanned by $T(s)$ and $N^f(s)$.
One has the Frenet equations:
\begin{alignat*}{4}
\dot{T} &=& \kappa^f N^f, \\
\dot{N^f} &=-\kappa^f T &&+\tau^f B^f, \\
\dot{B^f} &=&-\tau N^f.
\end{alignat*}
%\begin{align*}
%\dot{T} & = \quad \quad \quad\, \kappa^f N^f, \\
%\dot{N^f} & =  -\kappa^f T + \quad \quad \tau^f B^f, \\
%\dot{B^f} & = \quad \quad \quad -\tau N^f&.
%\end{align*}
The set, $\{T,N^f,B^f\}$, is called the Frenet frame of $\gamma$.

\section{Discussing the Problem}
In Chapter 1 of (Spivak, 1990) Spivak discusses why we cannot obtain a signed curvature $\kappa^{signed}$ for curves in three space and why we cannot, in general, hope to define torsion $\tau(s_0)$ at points where $\kappa^f(s_0)=0$.  

With respect to signed curvature, Spivak points out that there is no natural way to pick a vector orthogonal to a given $T(s_0)$ in $\mathbb{R}^3$.  Furthermore the Frenet frame, in particular the principle normal $N^f(s)$, is only defined on intervals where $\kappa^f(s) \neq 0$.  There may be no consistent way to choose the normal after passing through a point $s_0$ with $\kappa^f(s_0)=0$.  If $\kappa^f(s_0) = 0$, $N^f(s)$ may have a non-removable discontinuity at $s_0$.  However, we are able, for a large set of curves, to define a new frame, the Beta frame, $\{T,N^\beta, B^\beta\}$, which is at least $C^0$ and is defined even when $\kappa^f(s) =0$.  Furthermore $N^\beta(s)= \pm N^f(s)$, whenever $N^f(s)$ is defined.  Once we have a global definition of $N^\beta$, we define the signed curvature $\kappa^\beta$ ``the good way" by
\[\dot{T} =: \kappa^\beta N^\beta.\]
For all $s$ we will have $\kappa^\beta(s) = \pm \kappa^f(s)$.  If $\gamma$ is planar, then $N^\beta = N$ (the good normal) and $\kappa^\beta = \kappa^{signed}$.

With respect to torsion, Spivak argues that one cannot define $\tau^f(s)$ when $\kappa^f(s)=0$, because there exist examples where no reasonable definition would make sense. 
\begin{example}\label{spivak}(Spivak's Example.  See Figure \ref{spivakcurv}.) If $\gamma$ is the $C^\infty$ curve defined by
\[ \gamma(t) := 
\left\lbrace \begin{array}{cl} (s,e^{1/s^2},0) & \mbox{if}\; t>0,  \\
(0,0,0) & \mbox{if}\;\  t=0,\\
(s,0,e^{1/s^2}) & \mbox{if}\;\ t<0.
\end{array} \right. \]
then $\tau(t)=0$ everywhere except $t=0$.  But at $t=0$ the osculating plane jumps by an angle $\frac{\pi}{2}$.  Any attempt to define $\tau(0)$ would involve distributions and delta functions, which we will not pursue in this paper.
\end{example}

\begin{figure}[h!]
\begin{center}
\includegraphics[scale=.6]{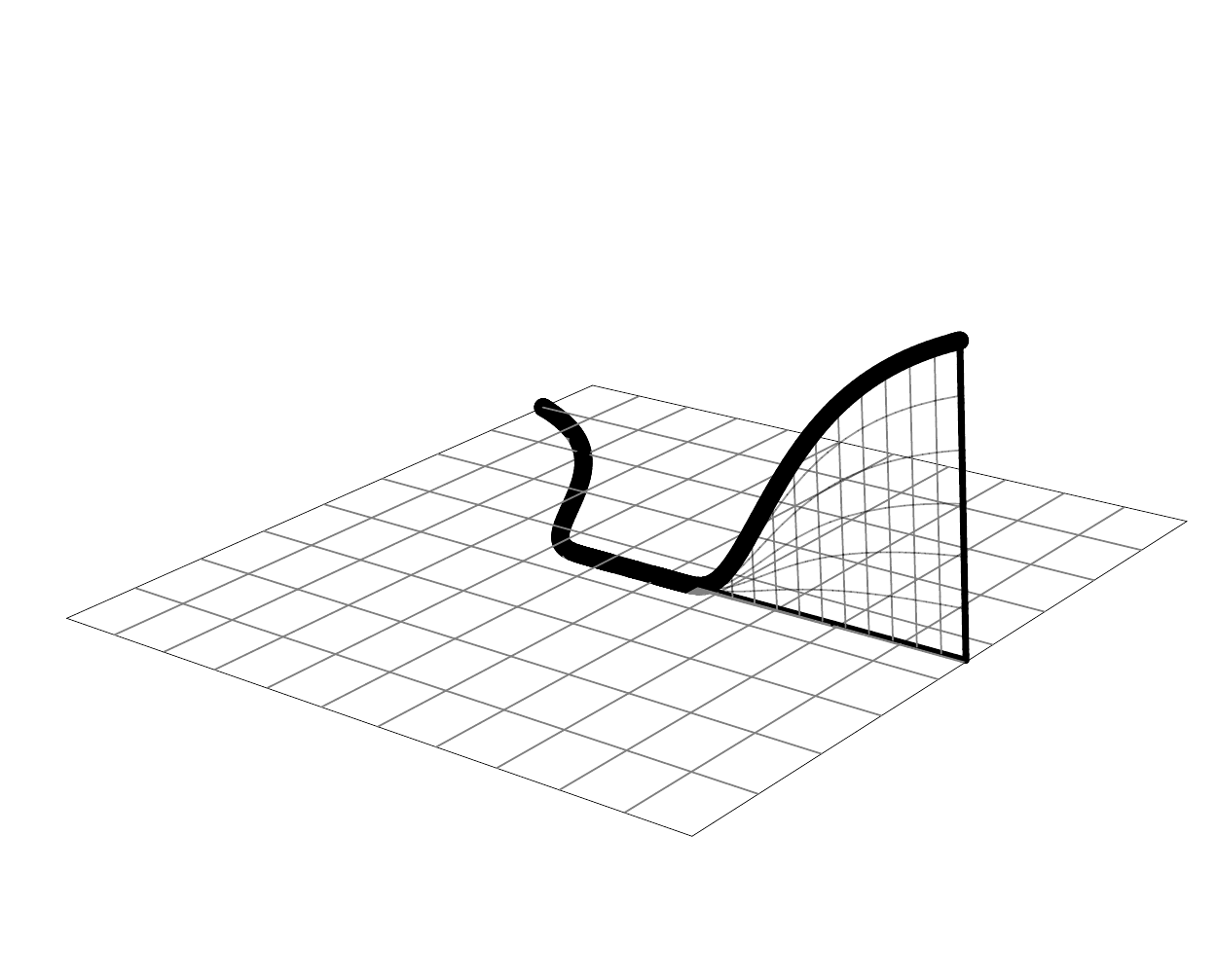}
\caption{Spivak's Example}\label{spivakcurv}
\end{center}
\end{figure}

We take a different approach than Spivak.  Instead of focusing on the examples where no reasonable definition is possible, we give conditions, satisfied by many curves of interest, that allow $\tau^\beta(s_0)$ to be defined even at points $s_0$ where $\kappa^f(s_0)=0$.  Furthermore $\tau^\beta = \tau^f$ whenever $\tau^f$ is defined.  Finally we will still have the Frenet equations:
\begin{alignat*}{4}
\dot{T} &=& \kappa^\beta N^\beta, \\
\dot{N^\beta} &=-\kappa^\beta T &&+\tau^\beta B^\beta, \\
\dot{B^\beta} &=&-\tau^\beta N^\beta.
\end{alignat*}

\section{The Bishop Frame}
Let $\gamma: (a,b) \longrightarrow \mathbb{R}^3$ be a $C^2$ with $\Vert \dot{\gamma} \Vert \equiv 1$.  The construction of a Bishop frame \cite{B} is based on the idea of relatively parallel fields.  In particular, a normal vector field $M(s)$ along a curve is called relatively parallel if 
\[\dot{M}(s) = g(s) T(s) \]
for some function $g(s)$.  A single unit normal vector $M_0$ at $\gamma(s_0)$ generates a unique relatively parallel unit normal vector field $M(s)$ along $\gamma$ with $M(s_0)=M_0$.  Moreover, any orthonormal basis $\{T, M_{1_0},M_{2_0}\}$ at $\gamma(s_0)$ generates a unique $C^1$ orthonormal frame $\{T,M_1,M_2\}$.  Bishop's equations are similar to the Frenet equations:
\begin{alignat*}{4}
\dot{T} & = & \kappa_1 M_1 &+\kappa_2 M_2, \\
\dot{M_1} & =  -\kappa_1 T, &&\\
\dot{M_2} & =  -\kappa_2 T. &&
\end{alignat*}
We have $\kappa^f(s)= \sqrt{\kappa_1^2(s) + \kappa_2^2(s)}$.  If $\kappa^f(s) \neq 0$, then 
\[N^f(s) = \frac{\kappa_1(s)}{\kappa^f(s)} M_1(s) + \frac{\kappa_2(s)}{\kappa^f(s)} M_2(s).\]
On sub-intervals of $(a,b)$ where $\kappa^f(s) \neq 0$, there exists a $C^0$ function $\theta(s)$ such that $N^f(s)=\cos \theta(s) M_1(s) + \sin \theta(s) M_2(s)$.  If moreover $\gamma$ is $C^3$, then $\tau^f(s) = \dot{\theta}(s)$.  In most applications the normal portion of the Bishop frame, $span\{M_1,M_2\}$, is usually written using this polar coordinate approach in complex form:
\[(\kappa_1,\kappa_2)= \kappa e^{i \int{\tau}}.\]
We will investigate these polar coordinates in some detail, but not using the complex form.  

\begin{rem} The function $\int \kappa$ is called the turn of $\gamma$.  We have $\theta = \int \tau$. 
%In this sense $\theta$ is the turn of the torsion.
$\theta$ is related to the twist and the writhe of $\gamma$ which we won't discuss here.
\end{rem}

\section{Bishop's Normal Development Curve} \label{normdev}
If $\gamma$ is $C^2$ and $\Vert \dot{\gamma}(s) \Vert \equiv 1$, then 
the curve $(\kappa_1(s),\kappa_2(s))=(r(s),\theta(s))$ in $\{M_1, M_2\}$ space is called Bishop's normal development of $\gamma$.  The normal development of $\gamma$ is determined up to rotation by a constant angle in the $\{M_1,M_2\}$ plane and a curve $\gamma$ is determined up to congruence by its normal development.  The parameter $s$ is an arclength parameter for $\gamma$, but in general is  {\it not} an arclength parameter for the normal development of $\gamma$.  The normal development of a line is the constant curve whose image is the origin and the normal development of a circle is the constant curve whose image is $(\kappa^f_{const} \neq 0,\theta_{const})$.  

If $\gamma$ is planar, then it has vanishing torsion and the normal development is given by $(r(s),\theta_{const})$.  For any $\gamma \in \mathbb{R}^3$, zeros of the normal development corresponds to points of zero curvature on $\gamma$.  If the normal development $(r(s),\theta(s))$ approaches the origin along a line and leaves the origin along a different line, the corresponding $\gamma$ is like Spivak's Example \ref{spivak}  above, it jumps from lying in one plane to lying in a different plane.  The normal development of a helix is a constant speed circle around the origin.  A $C^2$ curve is spherical if and only if its normal development lies on a line not through the origin \cite{B}. 

If $\gamma$ is $C^3$, we have seen that when $\kappa^f(s) \neq 0$, we have, for some function $\theta(s)$, $\tau^f(s) = \dot{\theta}(s)$.  In particular,  $\tau^f(s)$ will change signs at the local extrema of $\theta(s)$.   Curves of constant torsion $\pm 1$ correspond to $\theta(s)=\pm s + \theta_0$ with $r(s)$ arbitrary.  Curves of constant curvature $1$ correspond to curves with $r(s) \equiv 1$ and $\theta(s)$ arbitrary.

\section{Curves in Polar Coordinates Through (0,0)}{\label{lift}}
\subsection{Polar Lifts}
As we have seen points where $\kappa^f(s)=0$ on $\gamma$ correspond to the points on the normal development with $(r(s),\theta(s))=(0,0)$.  If $\kappa^f(s) \equiv 0$ on an interval, then $\gamma$ is a line segment on that interval, and the normal development remains at $(0,0)$ on that interval.  Dealing with the case of ``piecewise" defined curves including line segments is a delicate problem which we will address elsewhere.  We will only consider curves that have isolated points of zero curvature.  We assume that $(\kappa_1(s),\kappa_2(s))$ has an isolated zero at $s_0$.

Recall that $(\kappa_1(s),\kappa_2(s))$ is $C^0$.  Nevertheless there may not exist any pair of $C^0$ functions $\tilde{r}(s)$, $\tilde{\theta}(s)$ such that $(\tilde{r}(s),\tilde{\theta}(s)) = (\kappa_1(s),\kappa_2(s))$.  Even if $(\kappa_1(s),\kappa_2(s))$ is $C^\infty$, there may not exist such $C^0$ functions $\tilde{r}(s)$, $\tilde{\theta}(s)$.  More precisely, let the Cartesian plane be defined by
\[\mathbb{R}^2_{(x,y)} = \{(x,y) | -\infty < x < \infty, -\infty < y < \infty\},\]
and let the (extended) Polar plane be defined by
\[\mathbb{R}^2_{(\tilde{r},\tilde{\theta})} = \{(\tilde{r},\tilde{\theta)} | -\infty < \tilde{r} < \infty, -\infty < \tilde{\theta} < \infty\}.\]
Note that usually the definition of Polar plane restricts to $r >0$ and $\theta \in [0,2 \pi)$, but we will use ``Polar plane" in the extended sense as defined in $\mathbb{R}^2_{(\tilde{r},\tilde{\theta})}$.
Let $\pi: \mathbb{R}^2_{(r,\theta)} \longrightarrow \mathbb{R}^2_{(x,y)}$ be given by $\pi(r,\theta) = (r \cos \theta, r \sin \theta)$.  Note that $\pi$ is $C^\omega$.  Curves which are $C^0$ in the Polar plane project down to curves which are $C^0$ in the Cartesian plane.  However not all $C^0$ curves in the Cartesian plane are projections of $C^0$ curves in the Polar plane.  Examples include those Spivak-like curves which enter the origin from one direction $\theta_1$ and leave from a ``non-parallel" direction $\theta_2$.  Figure \ref{nolift} shows this and two other cases.

\begin{figure}[h!]
     \begin{center}
        \subfigure[Spivak-Like Lift]{
            \label{fig:spivaklift}
            \includegraphics[width=0.3\textwidth]{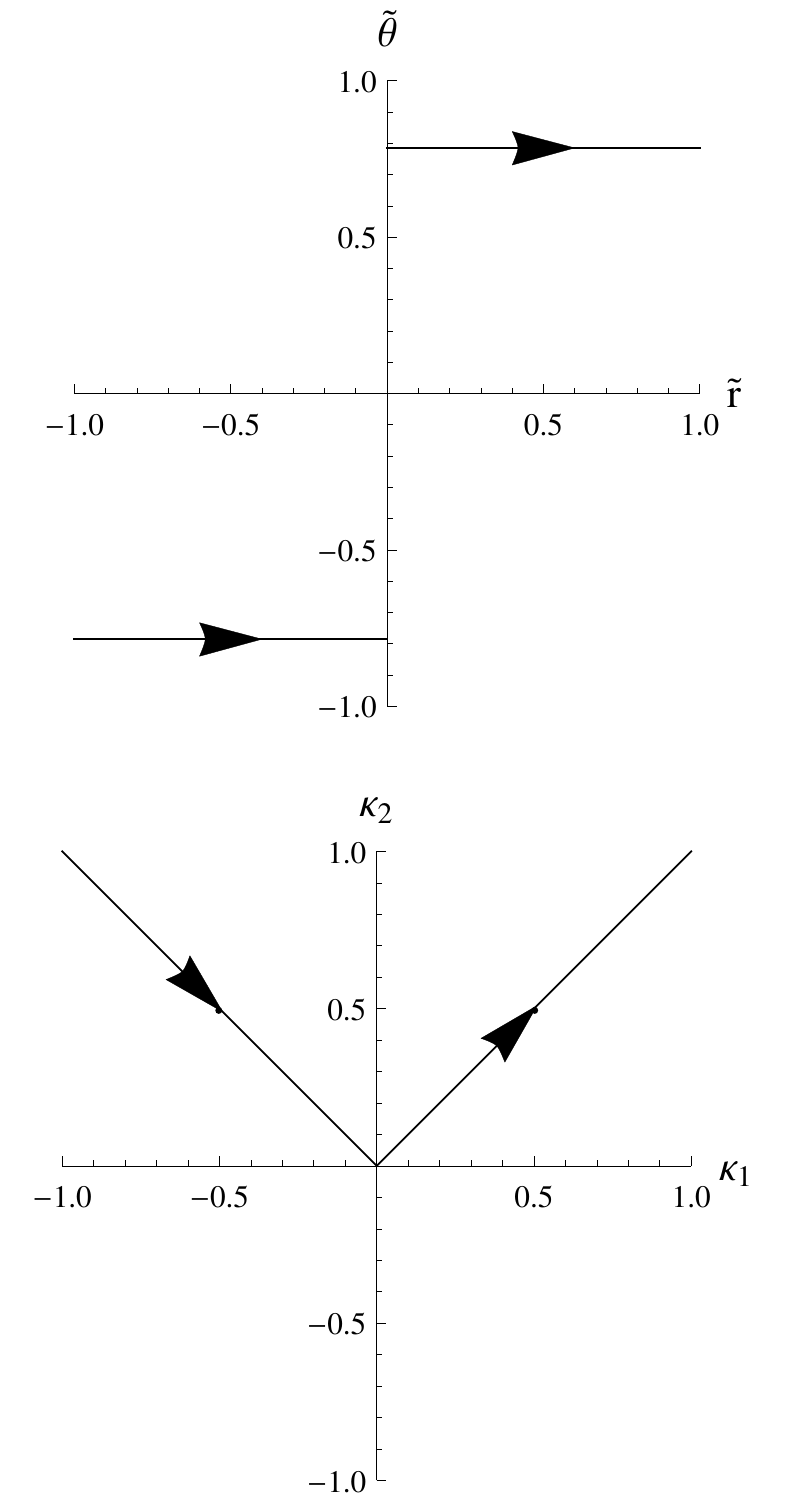}
        }
        \subfigure[$(\tilde{r},\tilde{\theta})=(s, \frac{\pi}{4} + \frac{\pi}{8} \sin{\frac{1}{s}})$]{
           \label{fig:noliftoscil}
           \includegraphics[width=0.3\textwidth]{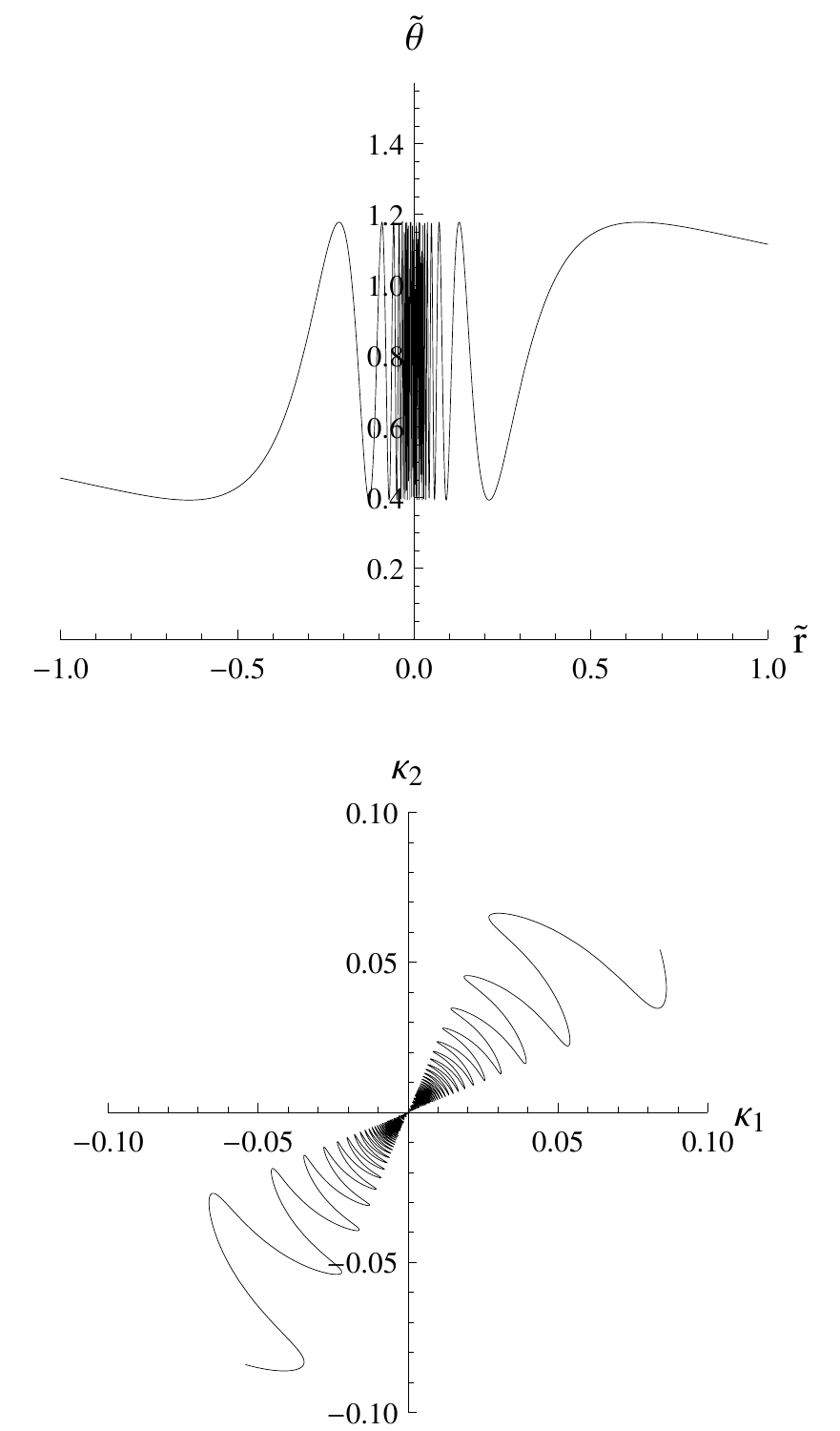}
        } 
        \subfigure[$(\tilde{r},\tilde{\theta})=(s,\frac{1}{\sqrt{s}})$]{
           \label{fig:noliftspiral}
           \includegraphics[width=0.3\textwidth]{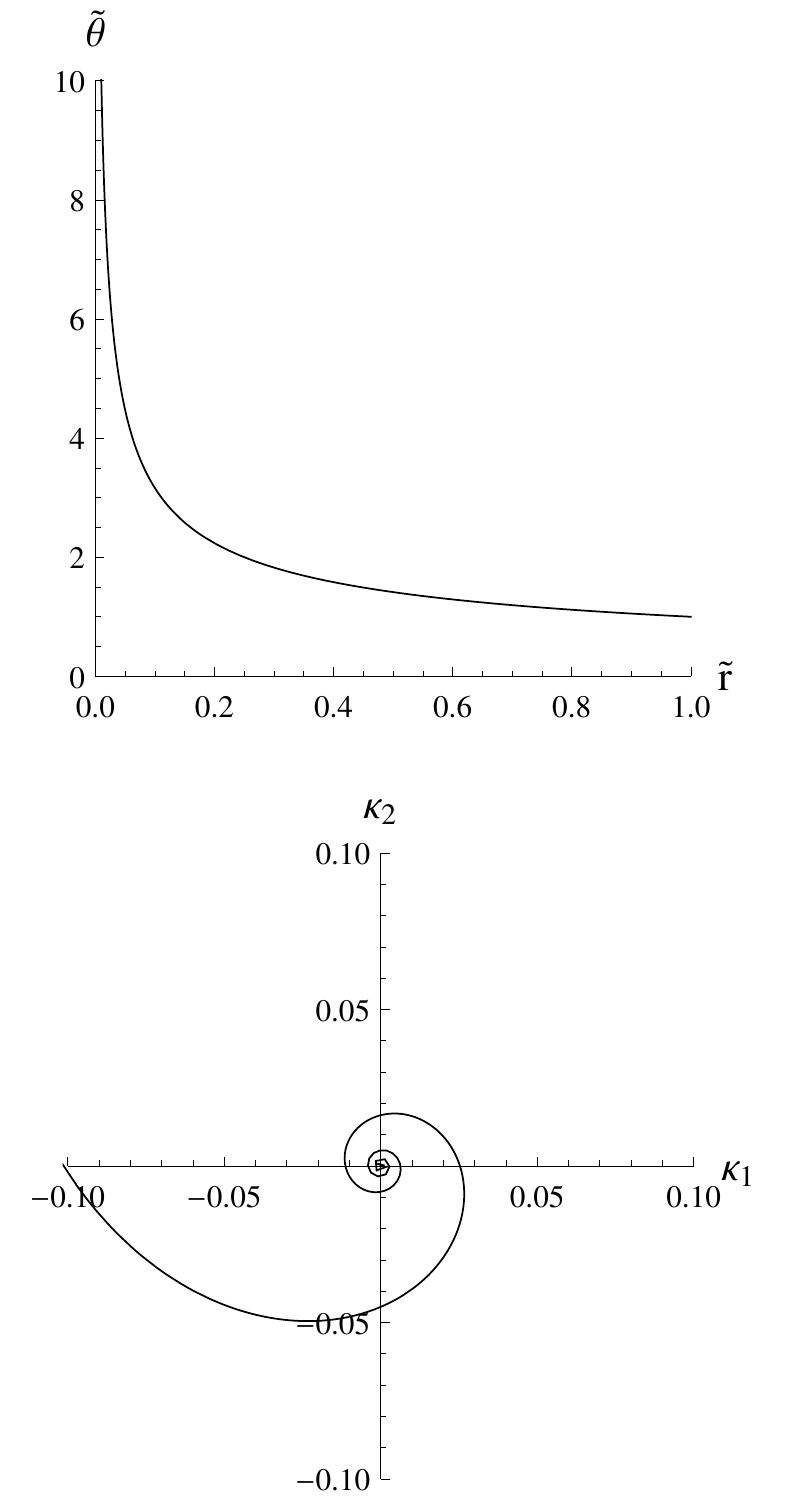}
        }%  ------- End of the first row ----------------------%
    \end{center}
    \caption{Some Examples Where $\tilde{\theta}$ Is Not Continuous}  \label{nolift}
\end{figure} 

Specifically we ask, given $(\kappa_1,\kappa_2): (a,b) \longrightarrow \mathbb{R}^2$ is $C^0$, under what conditions does there exist a $C^0$ lift $(\tilde{r},\tilde{\theta}): (a,b) \longrightarrow \mathbb{R}^2$ such that $\pi \circ (\tilde{r}(s),\tilde{\theta}(s)) = (\kappa_1(s), \kappa_2(s))$?  If $(\kappa_1,\kappa_2)$ is never $(0,0)$ then finding a lift $(\tilde{r}(s),\tilde{\theta}(s))$ is trivial.  If $(\kappa_1(s_0),\kappa_2(s_0)) = (0,0)$ is an isolated zero, then there exist a neighborhood $D$ of $s_0$ in $(a,b)$ such that $(\kappa_1(s),\kappa_2(s)) \neq (0,0)$ except at $s_0$.  At each such isolated zero we require the existence of a corresponding $C^0$ lift $(\tilde{r},\tilde{\theta}): D \longrightarrow \mathbb{R}_{(\tilde{r},\tilde{\theta})}^2$.  The first thought that comes at the reader's mind may be that the lift exists if and only if the derivatives in $s_0^+$ and $s_0^-$ coincide.  However, Figure \ref{yeslift}(b), is a simply example to show this is not true.  In fact much more subtle examples can be constructed.  Figures \ref{yeslift} and \ref{liftoscil} show some simple cases where $C^0$ lifts exist.  A unique $C^0$ global lift $(\tilde{r},\tilde{\theta})$ is then constructed by patching together the pieces through $(0,0)$ and pieces which avoid $(0,0)$.  

At this point we'd like to emphasize that we cannot ``just take the Bishop frame".  The problem is that although the Bishop frame is indeed global and $C^0$, the relationship with the Frenet frame (and in particular the principal normal) is not.  To recover a $C^0$ ``principal normal" a detailed analysis of these zeros is required.  Precisely the lack of such a global normal in the literature was the primary motivation of this paper.

\begin{figure}[h!]
     \begin{center}
        \subfigure[$(\tilde{r},\tilde{\theta})=(s,s + \frac{\pi}{4})$]{
            \label{fig:simpleL}
            \includegraphics[width=0.3\textwidth]{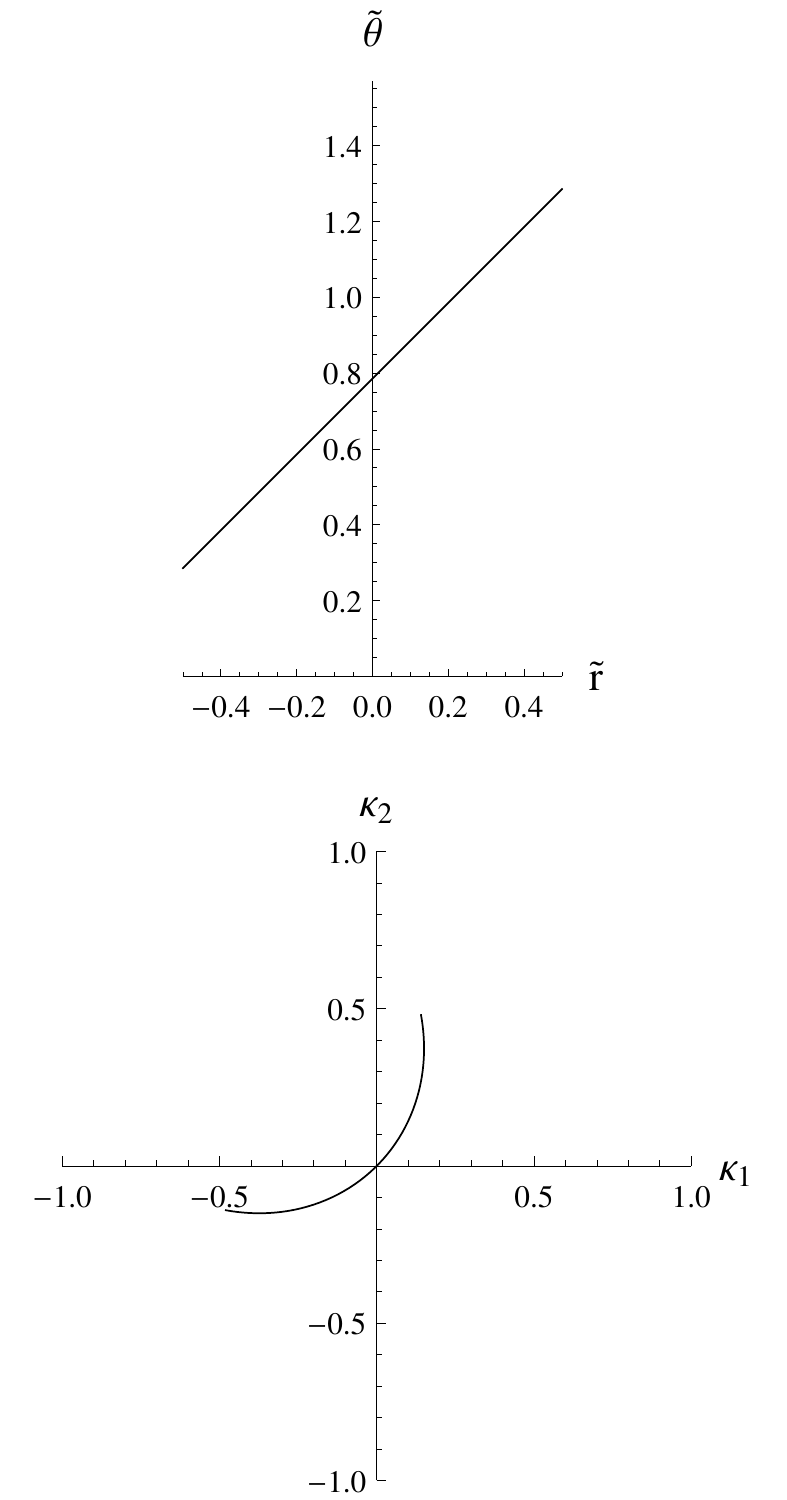}
        }
        \subfigure[$(\tilde{r},\tilde{\theta})=(|s|,s + \frac{\pi}{4})$]{
           \label{fig:simpleK}
           \includegraphics[width=0.3\textwidth]{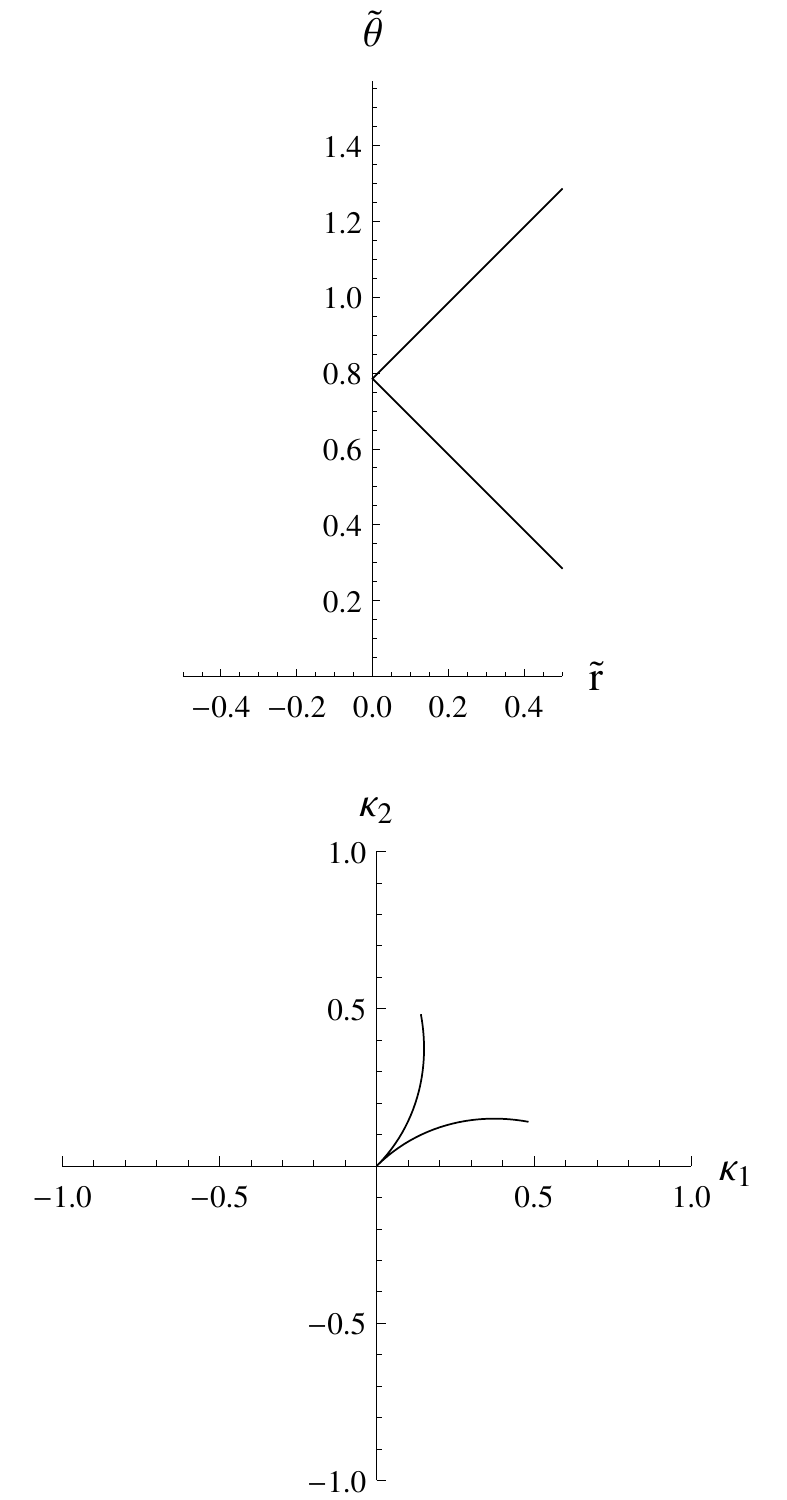}
        } 
        \subfigure[$(\tilde{r},\tilde{\theta})=(s,|s| + \frac{\pi}{4})$]{
           \label{fig:simpleV}
           \includegraphics[width=0.3\textwidth]{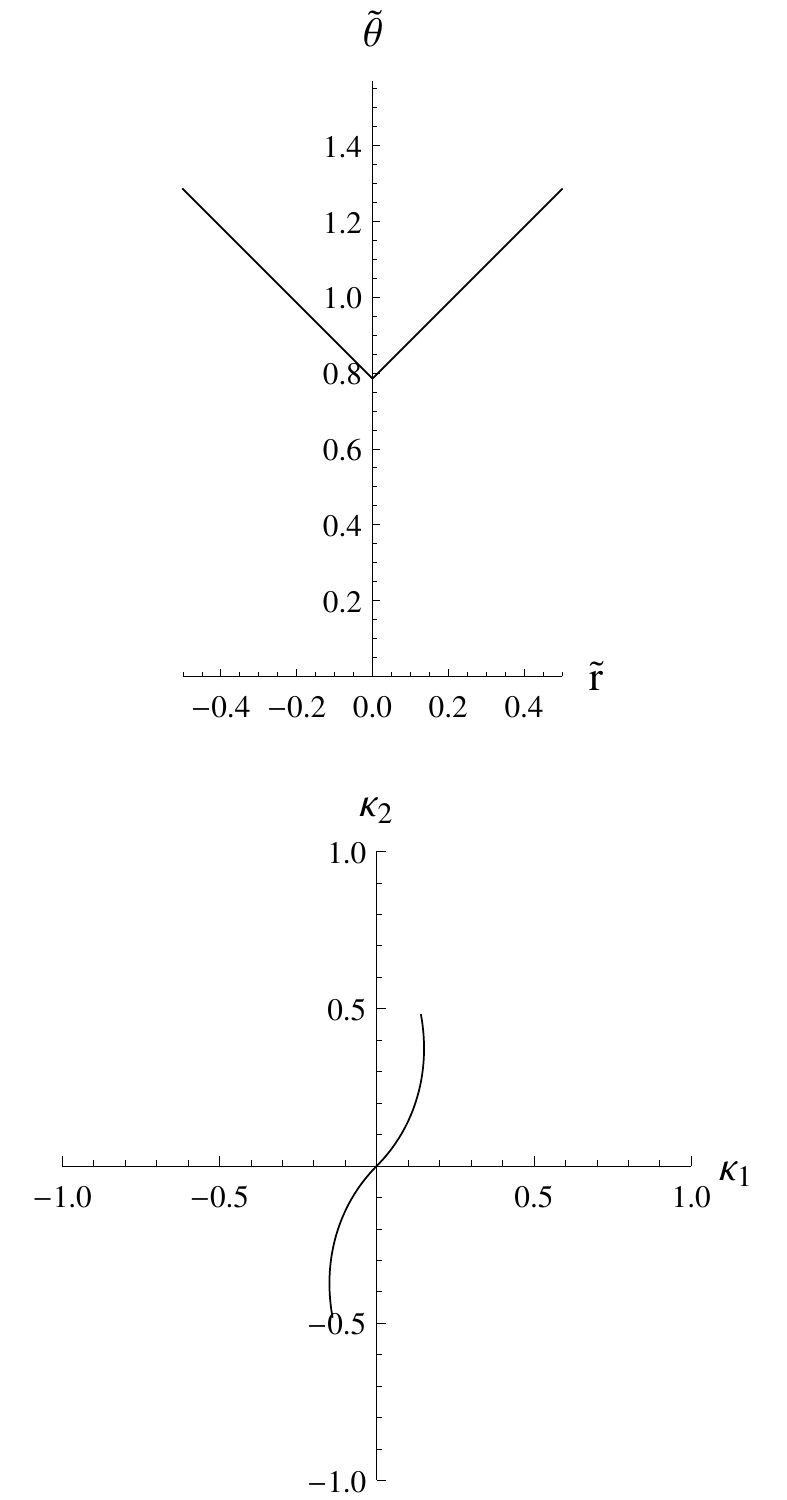}
        }%  ------- End of the first row ----------------------%
    \end{center}
    \caption{Simple Examples Where $\tilde{\theta}$ Is Continuous}  \label{yeslift}
\end{figure} 

\begin{figure}[h!]
\begin{center}
\includegraphics[scale=.6]{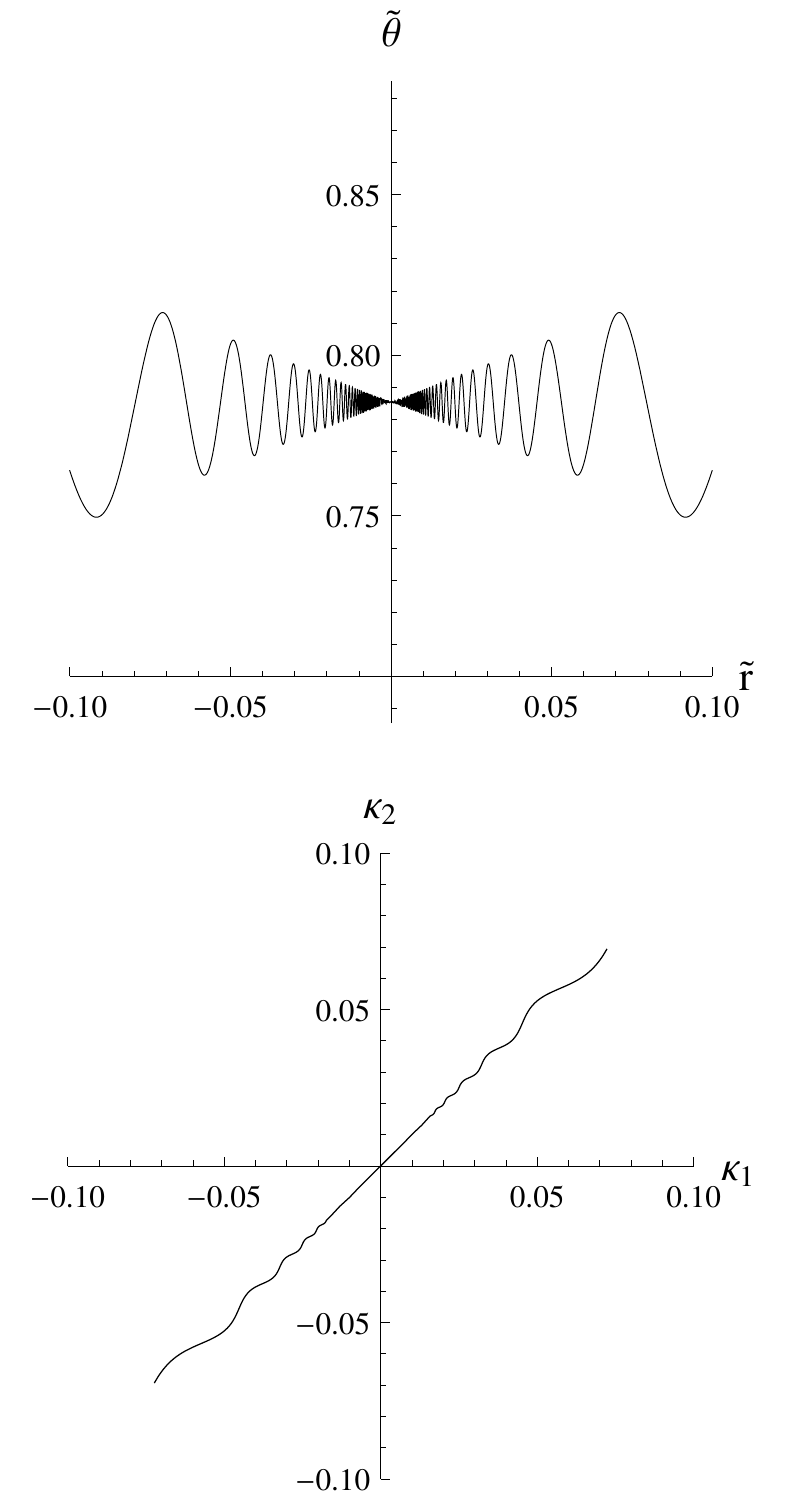}
\caption{$(\tilde{r},\tilde{\theta})=(s,\frac{\pi}{4} + s \frac{\pi}{8} \sin{\frac{1}{s}})$ Another Example Where $\tilde{\theta}$ Is Continuous}\label{liftoscil}
\end{center}
\end{figure}

\subsection{Main Lemma}
This subsection and the following Lemma are technical and unappealing, but straightforward.

We define $\hat{\theta}: \mathbb{R}^2\! - \!(0,0) \longrightarrow \left(-\frac{\pi}{2},\frac{\pi}{2} \right]$ by
\[ \hat{\theta}(x,y) := 
\left\lbrace \begin{array}{cl} \tan^{-1}\left( \frac{y}{x}\right)  & \mbox{if}\; x \neq 0, \\
\frac{\pi}{2} & \mbox{if}\; x=0, y \neq 0, \\
\mbox{undefined} & \mbox{if}\; x=0, y=0.
\end{array} \right. \]
\noindent If $\phi = \mbox{Arg}(x+iy)$ is defined to be the unique argument of $x+iy$ in $[0,2\pi)$, then
\[ \hat{\theta} = 
\left\lbrace \begin{array}{cl} \phi  & \mbox{if}\; \phi \in \left[0,\frac{\pi}{2}\right], \\
\phi - \pi  & \mbox{if}\; \phi \in \left(\frac{\pi}{2}, \frac{3 \pi}{2}\right], \\
\phi- 2\pi & \mbox{if}\; \phi \in \left(\frac{3 \pi}{2},2 \pi \right).
\end{array} \right. \]
A graph of $\hat{\theta}$ as a function of $\phi$ is shown in Figure \ref{canada2}.
\begin{figure}[h!]
\begin{center}
\includegraphics[scale=.6]{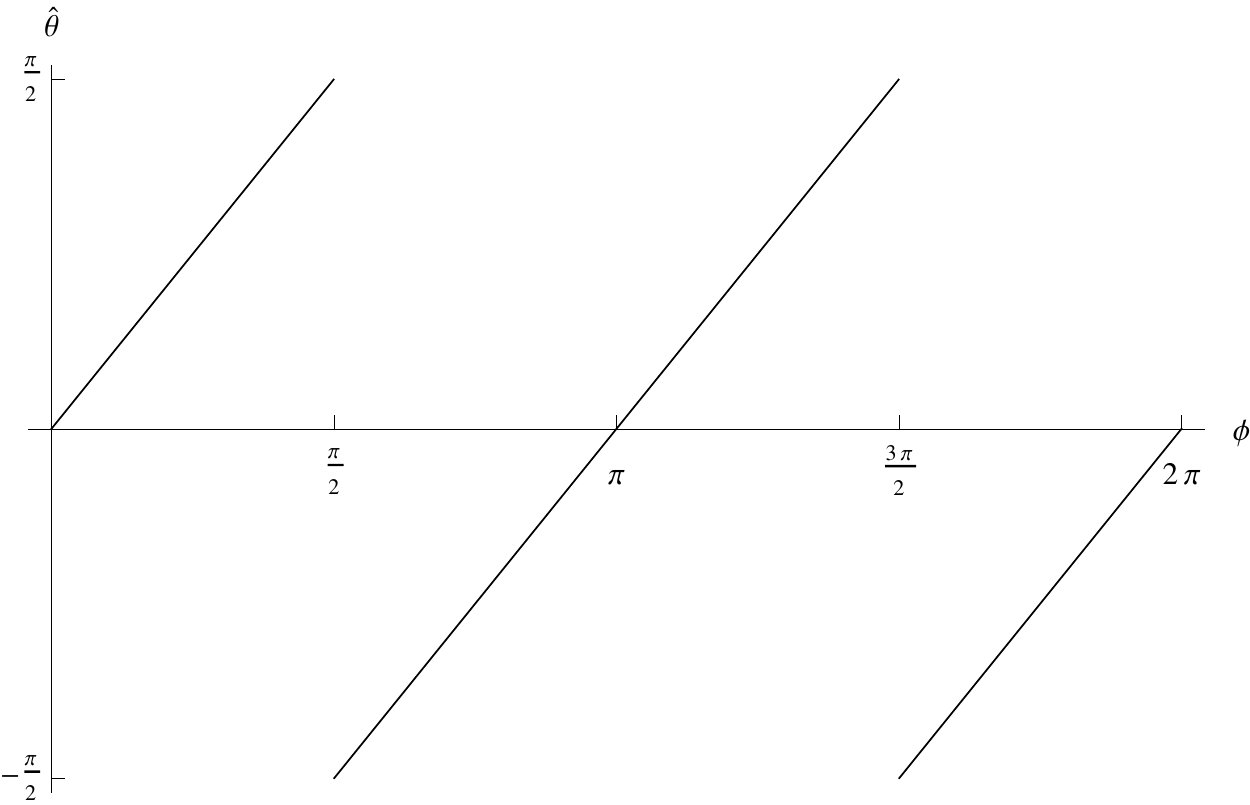}
\caption{$\hat{\theta}$ as a function of $\phi$}\label{canada2}
\end{center}
\end{figure}
Note
\[ \hat{\theta}\left(\pi (\tilde{r}(s),\tilde{\theta}(s))\right) = 
\left\lbrace \begin{array}{cl} \tilde{\theta}(s) \!\!\!\!\!\mod \!\pi  & \mbox{if}\; 0 \leq \tilde{\theta}(s) \!\!\!\!\!\mod \!\pi \leq \frac{\pi}{2}, \\
\left(\tilde{\theta}(s) \!\!\!\!\!\mod \!\pi\right)-\pi  & \mbox{if}\; \frac{\pi}{2} < \tilde{\theta}(s) \!\!\!\!\!\mod \!\pi < \pi.
\end{array} \right. \]
\noindent $\tan^{-1}$ and hence $\hat{\theta}$ jumps at $\frac{\pi}{2}$.  Thus special care must be taken to deal with the case of $(\kappa_1,\kappa_2)$ oscillating across the $y$-axis ($x \equiv 0$) infinitely often as it approaches the origin.  This would cause $\hat{\theta}$ to oscillate wildly between near $\frac{\pi}{2}$ and near $-\frac{\pi}{2}$.  To avoid this cosmetic problem we (in this case only) rotate $(\kappa_1,\kappa_2)$ by an angle $\frac{\pi}{4}$ (so the curve is now oscillating harmlessly about $-\frac{\pi}{4}$).  Then we can check its behavior without the jumping.  We will refer to this special case as Case 3; Case 1 being $-\frac{\pi}{2} < \hat{\theta}^+ < \frac{\pi}{2}$ and Case 2 being $\hat{\theta}^+ = \frac{\pi}{2}$.

Let 
\[ \hat{\theta}^+ := \lim_{s \rightarrow s_0^+} \hat{\theta}(\kappa_1(s), \kappa_2(s)), \]
and
\[ \hat{\theta}_{\frac{\pi}{2}}^+ := \lim_{s \rightarrow s_0^+} 
\hat{\theta}\left(\frac{\sqrt{2}}{2}\kappa_1(s) +\frac{\sqrt{2}}{2}\kappa_2(s), 
-\frac{\sqrt{2}}{2}\kappa_1(s) +\frac{\sqrt{2}}{2} \kappa_2(s)\right). \]
Note that these limits may not exist.
Finally we define
\[ \theta^+ := 
\left\lbrace \begin{array}{cl} \hat{\theta}^+ & \mbox{if}\; -\frac{\pi}{2} < \hat{\theta}^+ < \frac{\pi}{2},  \\
\frac{\pi}{2} & \mbox{if}\;\;  \hat{\theta}_{\frac{\pi}{2}}^+ = -\frac{\pi}{4},\\
\mbox{undefined} & \mbox{otherwise}.
\end{array} \right. \]
Replacing $s \rightarrow s_0^+$ with $s \rightarrow s_0^-$ we similarly define $\theta^-$.
\begin{lem}\label{mainlem}
Let $(\kappa_1, \kappa_2)$ be $C^0$ with an isolated zero at $s_0$.  Then there exists a $C^0$ lift $(\tilde{r},\tilde{\theta})$ near $s_0$ such that $\pi \circ (\tilde{r},\tilde{\theta}) = (\kappa_1,\kappa_2)$ if and only if both $\theta^+$ and $\theta^-$ exist and $\theta^+= \theta^-$.
\end{lem}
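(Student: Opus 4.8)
The plan is to exploit the two ``gauge'' symmetries of the projection $\pi$: for any $(\tilde r,\tilde\theta)$ one has $\pi(\tilde r,\tilde\theta)=\pi(\tilde r,\tilde\theta+2\pi)=\pi(-\tilde r,\tilde\theta+\pi)$, so a lift is free to shift $\tilde\theta$ by an element of $2\pi\mathbb{Z}$ and to flip the sign of $\tilde r$ at the cost of adding $\pi$ to $\tilde\theta$. Since $s_0$ is an isolated zero, on the punctured neighborhood $D\setminus\{s_0\}$ the pair $(\kappa_1,\kappa_2)$ never vanishes, so on each of the two arcs $s>s_0$ and $s<s_0$ it admits the canonical lift $r(s):=\sqrt{\kappa_1(s)^2+\kappa_2(s)^2}>0$ together with a continuous angle $\phi(s)$, unique up to an additive constant in $2\pi\mathbb{Z}$. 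Because $\lvert\tilde r\rvert=\lVert\pi(\tilde r,\tilde\theta)\rVert\to 0$ as $s\to s_0$, the $\tilde r$-component of any candidate lift is forced to $0$ at $s_0$ and is automatically continuous there; the entire content of the Lemma is therefore whether the angular component can be made to converge from both sides to a common value.

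For necessity, suppose a $C^0$ lift $(\tilde r,\tilde\theta)$ exists on $D$. Then $\tilde\theta$ is continuous at $s_0$, and on $D\setminus\{s_0\}$ the value $\hat\theta(\kappa_1(s),\kappa_2(s))$ agrees with $\tilde\theta(s)$ modulo $\pi$ in the sense recorded just after the definition of $\hat\theta$. If $\tilde\theta(s_0)$ is not vertical, the directions stay away from the $y$-axis and $\hat\theta^{\pm}$ exist by continuity; if $\tilde\theta(s_0)$ is vertical, the rotated quantity $\hat\theta_{\pi/2}^{\pm}$ is the one that converges. Either way both $\theta^+$ and $\theta^-$ exist and equal the representative of $\tilde\theta(s_0)$ in $\left(-\tfrac\pi2,\tfrac\pi2\right]$, which is exactly the asserted condition.

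For sufficiency I would first prove a ``no-spiraling'' step: existence of $\theta^+$ forces the continuous angle $\phi$ on the arc $s>s_0$ to converge, not merely modulo $\pi$ but to a definite value $\theta^++k^+\pi$ for some integer $k^+$. In Case~1, where $\theta^+=\hat\theta^+\in\left(-\tfrac\pi2,\tfrac\pi2\right)$, this holds because $\hat\theta(\kappa_1,\kappa_2)=\phi\bmod\pi$ eventually lies in a small interval about $\theta^+$ whose $\pi$-translates are disjoint, so the continuous $\phi$ cannot migrate between them and must settle in one. In Cases~2 and~3, where the limiting line is vertical and $\hat\theta$ jumps along the $y$-axis, the same argument is run after the fixed rotation by $\tfrac\pi4$ used to define $\hat\theta_{\pi/2}^+$, which moves the limiting direction into the interior of the range of $\hat\theta$ and removes the spurious jumps; since a rigid rotation of $\mathbb{R}^2_{(x,y)}$ only shifts $\tilde\theta$ by a constant, it does not affect whether a lift exists, so the conclusion transfers back. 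Granting convergence on both arcs, I would patch: take the canonical lift $(r,\phi)$ on $s>s_0$, so $\tilde\theta\to\theta^++k^+\pi$, and the canonical lift $(r,\psi)$ on $s<s_0$, so $\tilde\theta\to\theta^-+k^-\pi=\theta^++k^-\pi$. Using the $2\pi$-shift freedom when $k^+-k^-$ is even, and additionally the sign-flip $r\mapsto-r,\ \psi\mapsto\psi+\pi$ when $k^+-k^-$ is odd, I can force the two one-sided angular limits to coincide; setting $\tilde r(s_0)=0$ then yields a single $C^0$ lift on all of $D$.

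The main obstacle is the no-spiraling step in the vertical case. The examples in Figure~\ref{nolift}(b),(c) show that if the direction merely oscillates with bounded amplitude, or winds with $\phi\to\pm\infty$, as $s\to s_0$, then no lift exists even though $r\to0$; conversely Figure~\ref{liftoscil} shows that decaying oscillation still admits a lift. What makes this delicate is precisely that $\hat\theta$ has a jump on the $y$-axis, so ``convergence of the line direction to vertical'' cannot be read off $\hat\theta^+$ directly. The role of $\hat\theta_{\pi/2}^+$ and the $\tfrac\pi4$ rotation is to detect exactly this convergence cleanly, and verifying that $\hat\theta_{\pi/2}^+=-\tfrac\pi4$ is equivalent to genuine convergence of the continuous angle to the vertical line (modulo $\pi$) is the one point that requires real care. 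The remaining bookkeeping---matching parities and $2\pi$-shifts---is routine.
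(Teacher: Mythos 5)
Your proposal is correct and follows essentially the same route as the paper's proof: the same necessity argument via continuity of $\tilde{\theta}$ at $s_0$ and the identity $\hat{\theta}(\kappa_1,\kappa_2)=\tan^{-1}(\tan\tilde{\theta})$, the same Case 1/2/3 split with the $\tfrac{\pi}{4}$ rotation handling the vertical direction, and the same patching of one-sided lifts by a $2\pi$-shift or a sign-flip of $\tilde{r}$ plus a $\pi$-shift according to the parity of the angular mismatch. If anything, your explicit justification of the no-spiraling step (the continuous angle cannot migrate between disjoint $\pi$-translates of a small interval about $\theta^+$) spells out a point the paper leaves terse.
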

\begin{proof}
First assume there does exist a $C^0$ lift $(\tilde{r},\tilde{\theta})$ near $s_0$ such that $\pi \circ (\tilde{r},\tilde{\theta}) = (\kappa_1,\kappa_2)$.  In other words for all $s$ near $s_0$ we have
\[\left(\tilde{r}(s) \cos \tilde{\theta}(s), \tilde{r}(s) \sin \tilde{\theta}(s)\right) = (\kappa_1(s),\kappa_2(s)).\]
Since the zero is isolated at $s_0$ we may assume $\tilde{r}(s) \neq 0$ near $s_0$.  

We next prove $\theta^+$ exists.  By definition this means that either $\hat{\theta}^+$ exists with $-\frac{\pi}{2} < \hat{\theta}^+ < \frac{\pi}{2}$ and/or $\hat{\theta}_{\frac{\pi}{2}}^+$ exists with $\hat{\theta}_{\frac{\pi}{2}}^+ = - \frac{\pi}{4}$.  By continuity
\begin{equation}\label{main}
\lim_{s \rightarrow s_0^+} \tilde{r}(s) = 0 = \lim_{s \rightarrow s_0^-} \tilde{r}(s)\;\;\; \mbox{and}\;\; \lim_{s \rightarrow s_0^+} \tilde{\theta}(s) = \tilde{\theta}(s_0) = \lim_{s \rightarrow s_0^-} \tilde{\theta}(s).
\end{equation}
When $s \neq s_0$ we have
\begin{eqnarray*}
\hat{\theta}(\kappa_1(s),\kappa_2(s)) & = & \left\lbrace \begin{array}{cc}
\tan^{-1}  \left( \frac{\tilde{r}(s) \sin \tilde{\theta}(s)}{\tilde{r}(s) \cos \tilde{\theta}(s)} \right)   &  \mbox{if}\; \cos \tilde{\theta}(s) \neq 0, \\ 
\frac{\pi}{2}& \mbox{if}\; \cos \tilde{\theta}(s) = 0.
\end{array}  \right. \\
\Longrightarrow \hat{\theta}(s) & = &  \left\lbrace \begin{array}{cc}
\tan^{-1}  \left( \tan (\tilde{\theta}(s))\right) &  \mbox{if}\; \tilde{\theta}(s) \bmod \pi \neq \frac{\pi}{2},  \\ 
\frac{\pi}{2}& \mbox{if}\; \tilde{\theta}(s) \bmod \pi = \frac{\pi}{2}.
\end{array}  \right.
\end{eqnarray*}
By conditions (\ref{main}) eventually $\tilde{\theta}(s)$ is near $\tilde{\theta}(s_0)$.  If $\tilde{\theta}(s_0) \bmod \pi \neq \frac{\pi}{2}$, then eventually $\tilde{\theta}(s) \bmod \pi \neq \frac{\pi}{2}$ and 
\[\theta^+ = \hat{\theta}^+ = \tan^{-1} \left(\tan (\tilde{\theta}(s_0))\right).\]
If $\tilde{\theta}(s_0) \bmod \pi = \frac{\pi}{2}$, then we rotate by $\frac{\pi}{4}$ and by the same argument we have 
\[\hat{\theta}_{\frac{\pi}{2}}^+ = \tan^{-1}\left(\tan(\tilde{\theta}(s_0)+\frac{\pi}{4})\right) = \tan^{-1}\left(\tan(\frac{3 \pi}{4})\right) = -\frac{\pi}{4}.\]
So $\theta^+ = \frac{\pi}{2}$.  Thus in either case $\theta^+$ exists.

Similarly $\theta^-$ exists and by condition (\ref{main}) $\theta^+ = \theta^-$.

Conversely assume $\theta^+$ and $\theta^-$ exist and $\theta^+=\theta^-$.  Let $(\tilde{r}^+(s),\tilde{\theta}^+(s))$ (resp.  $(\tilde{r}^-(s),\tilde{\theta}^-(s))$ be any $C^0$ lift for $s > s_0$ (resp. $s < s_0$).  Still assuming $r(s) \neq 0$ for $s \neq s_0$, without loss of generality assume both $\tilde{r}^+(s) > 0$ and $\tilde{r}^-(s) > 0$ for $s \neq s_0$.  

First we consider the Case 1 where $-\frac{\pi}{2} < \hat{\theta}^+ < \frac{\pi}{2}$ (and hence  $-\frac{\pi}{2} < \hat{\theta}^- < \frac{\pi}{2}$).  We want to show there is a $C^0$ lift $(\tilde{r},\tilde{\theta})$.  We claim $\lim_{s \rightarrow s_0^+} \tilde{\theta}^+(s)$ and $\lim_{s \rightarrow s_0^-} \tilde{\theta}^-(s)$ exist.  More precisely we have
\[ -\frac{\pi}{2} < \lim_{s \rightarrow s_0^+} \tan^{-1} \left(\frac{\kappa_2(s)}{\kappa_1(s)}\right) < \frac{\pi}{2}.\]
Or
\[ -\frac{\pi}{2} < \lim_{s \rightarrow s_0^+} \tan^{-1} \left(\frac{\tilde{r}^+(s) \sin \tilde{\theta}^+(s)}{\tilde{r}^+(s) \cos \tilde{\theta}^+(s)}\right) < \frac{\pi}{2}.\]
Or
\[ -\frac{\pi}{2} < \lim_{s \rightarrow s_0^+} \tan^{-1} \left(\tan \tilde{\theta}^+(s)\right) < \frac{\pi}{2}.\]
Eventually $\tilde{\theta}^+(s) \bmod \pi$ avoids $\frac{\pi}{2}$.  Thus eventually 
\begin{equation}\label{pie}\tan^{-1} \left(\tan \tilde{\theta}^+(s)\right) = 
\left\lbrace
\begin{array}{c} \tilde{\theta}^+(s) \bmod \pi \\ 
\mbox{or} \;\; \left( \tilde{\theta}^+(s) \bmod \pi \right) - \pi. \end{array} \right. 
\end{equation}
 In either case $\tilde{\theta}^+(s_0) = \lim_{s \rightarrow s_0^+} \tilde{\theta}^+(s)$ exists.  By the same argument  $\tilde{\theta}^-(s_0) = \lim_{s \rightarrow s_0^-} \tilde{\theta}^-(s)$ exists.  Since $\theta^+=\theta^-$ we know by Equation (\ref{pie}) that $\tilde{\theta}^+(s_0)-\tilde{\theta}^-(s_0)$ is a multiple of $\pi$.  Since we have assumed both $\tilde{r}^+(s) >0$ and $\tilde{r}^-(s)>0$ we can define $(\tilde{r}(s),\tilde{\theta}(s))$ depending on whether $(\kappa_1,\kappa_2)$ approaches the origin from the same or opposite directions  as $s$ approaches $s_0$ from the left and right.  In the first case we have $j(s_0)=\tilde{\theta}^+(s_0)-\tilde{\theta}^-(s_0) = 0 \bmod 2\pi$ and
 \[
 (\tilde{r}(s),\tilde{\theta}(s)) := 
 \left\lbrace 
 \begin{array}{cc}
 (\tilde{r}^-(s),\tilde{\theta}^-(s) + j(s_0))& \mbox{if}\;\; s < s_0,\\ 
 (0,\tilde{\theta}^+(s_0))& \mbox{if}\;\; s = s_0, \\
 (\tilde{r}^+(s),\tilde{\theta}^+(s))& \mbox{if}\;\; s > s_0.
 \end{array}
  \right. 
 \]
 If  $j(s_0)=\tilde{\theta}^+(s_0)-\tilde{\theta}^-(s_0) = \pi \bmod 2\pi$ then
 
 \[
 (\tilde{r}(s),\tilde{\theta}(s)) := 
 \left\lbrace 
 \begin{array}{cc}
 (-\tilde{r}^-(s),\tilde{\theta}^-(s) + j(s_0))& \mbox{if}\;\; s < s_0,\\ 
 (0,\tilde{\theta}^+(s_0))& \mbox{if}\;\; s = s_0, \\
 (\tilde{r}^+(s),\tilde{\theta}^+(s))& \mbox{if}\;\; s > s_0.
 \end{array}
  \right. 
 \]

In Case 2 we assume $\hat{\theta}^+=\frac{\pi}{2}$.  Since this implies $\hat{\theta}_{\frac{\pi}{2}}^+ = -\frac{\pi}{4}$ we see that Case 2 is included in Case 3.

Finally we consider Case 3:
\[\hat{\theta}_{\frac{\pi}{2}}^+ = -\frac{\pi}{4}.\]
As discussed above $(\kappa_1,\kappa_2)$ is oscillating across the $y$-axis, but otherwise converges nicely.  We can $C^0$ lift the rotated $(\kappa_1,\kappa_2)$ and then shift that $(\tilde{r},\tilde{\theta})$ by $\frac{\pi}{4}$.
\end{proof}
As mentioned above, if the conditions of Lemma \ref{mainlem} are valid at all points of zero curvature, then we have a global $C^0$ lift $(\tilde{r},\tilde{\theta})$ of $(\kappa_1,\kappa_2)$.  Without loss of generality we will assume $\tilde{\theta}(0)=0$.

\section{The Beta Frame}
\subsection{Initial Conditions}
Without loss of generality we assume $\kappa^f(0) \neq 0$ and define $M_1(s), M_2(s)$ by the following initial conditions:
\begin{enumerate}
\item $\mbox{If $\gamma$ is planar, then } M_1(0)=N(0), M_2(0)=T(0) \times N(0)$,
\item $\mbox{If $\gamma$ is not-planar, then } M_1(0)=N^f(0), M_2(0)=B^f(0)$.
\end{enumerate}
\subsection{The Beta Normal, Signed Curvature and Binormal}
Assume that the normal development of $\gamma$ has a continuous lift $(\tilde{r},\tilde{\theta})$ with $\tilde{\theta}(0)=0$ as in Section \ref{lift} so that 
\[(\kappa_1,\kappa_2) = (\tilde{r} \cos \tilde{\theta},\tilde{r} \sin\tilde{\theta}). \]
Then we can globally define $N^\beta$ by
\[N^\beta(s) := \cos{\tilde{\theta}(s)}\;M_1(s) + \sin{\tilde{\theta}(s)} \; M_2(s)\]
and note that
\[N^\beta = \pm N^f\]
whenever $N^f$ is defined.
We define our signed curvature $\kappa^\beta$ by 
\[\dot{T}(s) =: \kappa^\beta(s) N^\beta(s).\]
Note
\[\kappa^\beta = \tilde{r}\]
and 
\[\kappa^\beta = \pm \kappa^f\]
whenever $\kappa^f$ is defined.
We define $B^\beta$ by 
\[B^\beta(s) := T(s) \times N^\beta(s)\]
and note that
\[B^\beta = -\sin \tilde{\theta}\; M_1 + \cos \tilde{\theta} \;M_2.\]
The globally defined frame $\{T,N^\beta,B^\beta \}$ is called the Beta frame.  The Beta frame (when defined) is unique and is invariant under regular, orientation preserving, base point fixing reparametrizations.  If the base point changes, it may happen that $N^\beta$ and $B^\beta$ globally switch signs.

\section{Torsion}
Finally we assume $\gamma$ is $C^3$, $\Vert \dot{\gamma}\Vert \equiv 1$ and assume there exists a continuous lift $(\tilde{r},\tilde{\theta})$ of $(\kappa_1,\kappa_2)$ as in the last section.  In this case we have that $\kappa_1$, $\kappa_2$ are $C^1$ and after a bit of checking $\tilde{r}$ is $C^1$.  $\tilde{\theta}$ is once again more difficult.  Even if both $\kappa_1$, $\kappa_2$ are $C^1$, there is no guarantee that $\tilde{\theta}$ is $C^1$ if $\kappa^\beta(s_0)=0$ (even if $\Vert (\dot{\kappa_1}(s_0),\dot{\kappa_2}(s_0))\Vert \neq 0$.  For example $(\tilde{r},\tilde{\theta}) = (s, s^\frac{1}{3})$.  See Figure \ref{c0notc1}.

\begin{figure}[h!]
\begin{center}
\includegraphics[scale=.6]{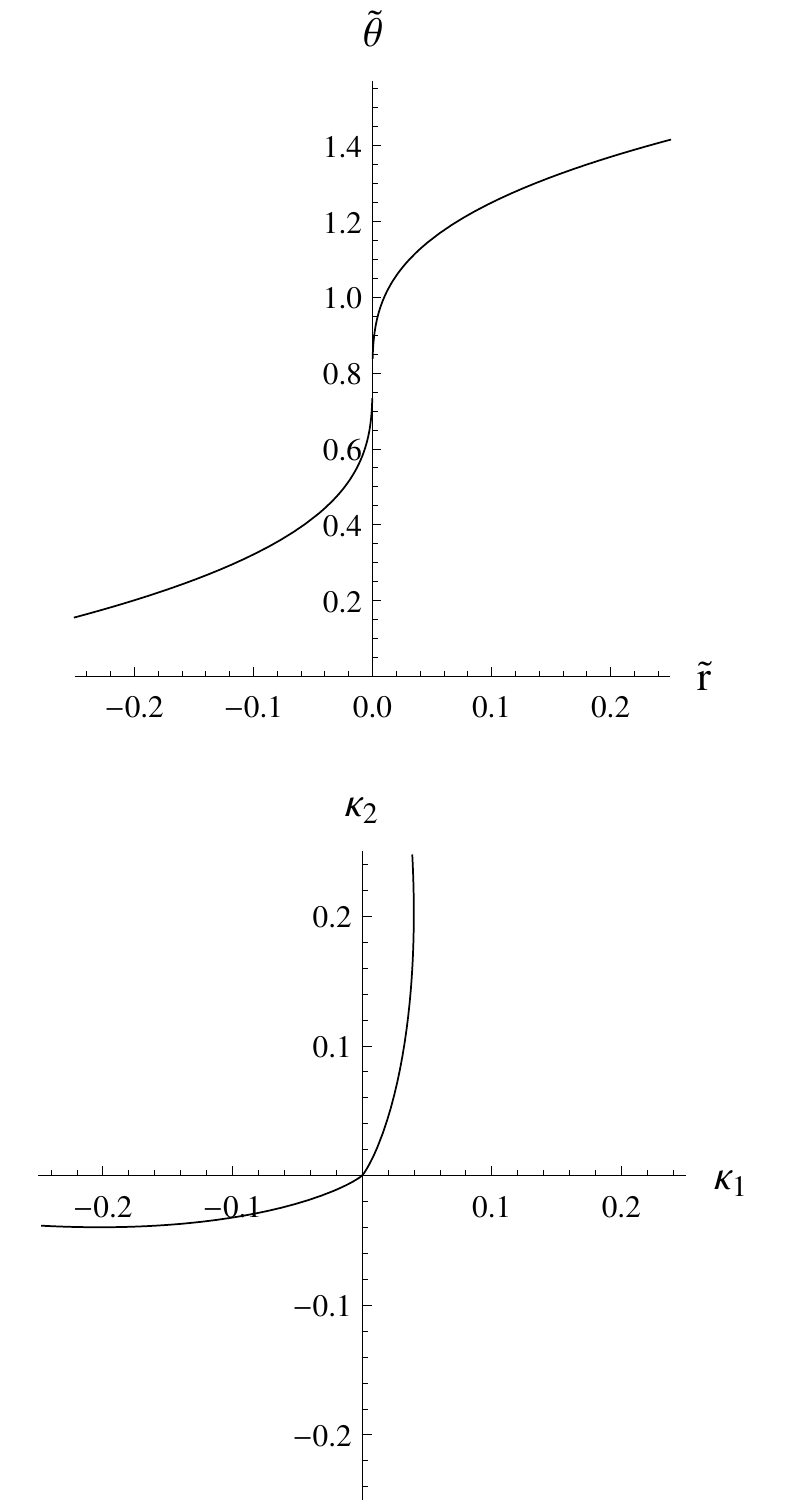}
\caption{$(\tilde{r},\tilde{\theta})=(s,s^\frac{1}{3})$ An Example Where $\tilde{\theta}$ is $C^0$ but not $C^1$}\label{c0notc1}
\end{center}
\end{figure}

Assuming $\tilde{\theta}(s)$ is $C^1$ at all curvature zero points, then the lift $(\tilde{r},\tilde{\theta})$ is globally $C^1$ and we define $\tau^\beta$ by 
\[ \tau^\beta(s) := \dot{\tilde{\theta}}(s)\]
and note that
\[\tau^\beta = \tau^f\]
whenever $\tau^f$ is defined.

As promised we will still have the Frenet equations:
\begin{alignat*}{4}
\dot{T} &=& \kappa^\beta N^\beta, \\
\dot{N^\beta} &=-\kappa^\beta T &&+\tau^\beta B^\beta, \\
\dot{B^\beta} &=&-\tau^\beta N^\beta.
\end{alignat*}

\end{document}